\newtheorem{thm}{Theorem}[section]
\newtheorem{cor}[thm]{Corollary}
\newtheorem{lem}[thm]{Lemma}
\newtheorem{con}[thm]{Conjecture}
\numberwithin{equation}{section}
\numberwithin{equation}{section}
\author {Ya\c{s}ar Polato\~{g}lu, Oya Mert$^{\ast}$ and Asena \c{C}etinkaya}
\title {On The Coefficient Conjecture of Clunie and Sheil-Small
\footnotetext{\textit{\textnormal{2010} AMS Mathematics Subject Classification:}
30C45 \\ \textit{Key words and phrases:} Harmonic mapping, Schwarz function, subordination, coefficient estimate, distortion theorem.\\
Corresponding Author$^{\ast}$
}}
\date{ }
\newcommand{\de}{{\mathbb D}}
\newcommand{\ce}{{\mathbb C}}
\begin{document}
\maketitle

\begin{abstract}
In this paper, we first prove relation between analytic and co-analytic part of the class  harmonic univalent functions $\mathcal{S}_{\mathcal{H}}(\mathcal{S}):=\left\{f=h+\overline g|\ h\in \mathcal{S}\right\} $ by means of second dilatation is constant. Next, we verify the coefficient conjecture of Clunie and Sheil-Small for class of  harmonic univalent functions. Finally, we obtain distortion bounds of this class. 
\end{abstract}

\maketitle

\section{Introduction}
Planar harmonic univalent mappings and related functions have applications in the diverse fields of Engineering, Physics, Electronics, Medicine, and other branches of applied mathematical sciences. For example, E. Heinz \cite{HEN} in 1952 used such mappings in the study of the Gaussian curvature of nonparametric minimal surfaces over the unit disc. Harmonic univalent mappings have attracted the serious attention of complex analysts after the appearance of a basic paper by Clunie and Sheil-Small \cite{Clunie1984} in 1984. The works of these researchers and several others (e.g. see \cite{SCHO, SCHO2, Sheil}) gave rise to several interesting problems, conjectures, and questions in harmonic univalent theory.

Let  $\mathcal{H}$ be the family of continouos complex-valued harmonic functions $f  =h+\overline{g}$ defined in the open unit disc  $\mathbb{D}:=\{z: |z|<1\}$, where $h$ and $g$ has the  power series expansion $h(z)=\sum_{n=0}^\infty a_{n}{z^n}$ and $g(z)=\sum_{n=1}^\infty b_{n}{z^n}.$ Here $h$ the analytic  and $g$ the co-analytic part of $f$. The Jacobian $J_{f}$ of a function $f$ is
$$J_{f}(z)=|f_z(z)|^2-|f_{\overline z }(z)|^2=|h'(z)|^2-|g'(z)|^2,$$ 
where $f=h+\overline{g}$ is the harmonic function in $\de$. A harmonic mapping $f=h+\overline{g}$  defined in $\de$ is sense-preserving if $|h'|>|g'|$ in $\de$; sense-reversing if  $|h'|<|g'|$ in $\de$. An analytic univalent function is a special case of an sense-preserving harmonic univalent function. For analytic function $f$, it is well known that  $J_{f}(z)\neq 0$ if and only if $f$ is locally univalent at $z$. For harmonic functions we have the following useful result due to Lewy:  
\begin{thm}\label{teo1}$\cite{LEW}$ If $f=h+\overline{g}$ is a complex-valued harmonic function that is locally univalent $\de\subset\ce$, then its jacobian is $J_f(z)\neq 0$ for every $z\in\de$.
\end{thm}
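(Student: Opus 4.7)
The plan is to argue the contrapositive: assuming $J_f(z_0)=0$ at some interior point, I exhibit two distinct nearby points on which $f$ agrees, contradicting local injectivity. After a translation of domain and range, take $z_0=0$ and $h(0)=g(0)=f(0)=0$. Since $J_f=|h'|^2-|g'|^2$, the hypothesis gives $|h'(0)|=|g'(0)|$, so the $\re$-linear differential $L(z)=h'(0)\,z+\overline{g'(0)\,z}$ is singular; it has rank zero or one.

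If $L$ has rank zero, i.e.\ $h'(0)=g'(0)=0$, then $f$ vanishes to order $\ge 2$ at the origin. Let $m\ge 2$ be the smallest order at which the Taylor expansion has a non-zero term; the leading part is then a non-trivial harmonic polynomial $P_m(z)=A_m z^m+\overline{B_m z^m}$. In the generic sub-case $|A_m|\ne|B_m|$, $f$ has topological degree $m$ as a map of a small disk $D_r$ about $0$, so a Rouch\'e/winding-number argument produces $m$ preimages for every value in a punctured neighborhood of $0$ in the image, and local univalence fails. The non-generic sub-case $|A_m|=|B_m|$ is handled by iterating to the next non-vanishing Taylor order.

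If $L$ has rank one, I replace $f(z)$ by $e^{i\gamma}f(e^{i\delta}z)/r$ with suitable real constants $\gamma,\delta,r$ to normalize $h'(0)=g'(0)=1$. Then $L(z)=2\,\mathrm{Re}(z)$, with image $\re$ and kernel $i\re$, and
\begin{equation*}
f(iy)=-\tfrac{y^2}{2}\bigl[h''(0)+\overline{g''(0)}\bigr]+O(y^3),\qquad f(x)=2x+O(x^2).
\end{equation*}
Matching these expansions at leading order gives $x(y)=-\tfrac{y^2}{4}[h''(0)+\overline{g''(0)}]+\cdots$, and the implicit function theorem upgrades this leading-order agreement to an exact coincidence $f(x(y))=f(iy)$ for small $y>0$, with $x(y)\ne iy$. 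This is the desired contradiction of local univalence.

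The main obstacle is the nested degeneration in which the second-order coefficient $c:=h''(0)+\overline{g''(0)}$ itself vanishes, or in which $c$ is non-real so that $f(iy)$ drifts off the real axis before $f(x)$ can catch up. The remedy is to iterate the matching argument with the first non-vanishing higher-order coefficient along the kernel direction and, if necessary, to search for the second preimage along a parabolic arc rather than the real axis. Maintaining uniform control of remainder terms through this iteration, together with the extreme base case in which $f$ is constant along the kernel direction and non-injectivity is immediate, constitutes the technical heart of Lewy's original argument.
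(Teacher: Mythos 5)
First, a point of comparison: the paper does not prove this statement at all. It is Lewy's 1936 theorem, quoted verbatim with the citation \cite{LEW} as a known tool, so your attempt has to be judged against the classical argument rather than against anything in the text.

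Your proposal has a genuine gap at its decisive step. In the rank-one case you seek a real $x(y)$ with $f(x(y))=f(iy)$; since $f$ takes values in $\ce\cong\re^2$, that is a system of \emph{two} real equations in the \emph{one} real unknown $x$, and the implicit function theorem cannot solve an overdetermined system. What it actually gives (because $\partial_x\,\mathrm{Re}\,f(0)=2\neq 0$) is a branch $x(y)$ matching the real parts only; nothing forces the imaginary parts to agree. Geometrically, the image of the real axis (tangent to $\re$ at $0$) and the image of the imaginary axis (leaving $0$ in the direction of $-(h''(0)+\overline{g''(0)})$, or flatter still) are two arcs through a common point, and two such arcs generically meet \emph{only} at that point. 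Your final paragraph concedes exactly this (the coefficient may be non-real, may vanish, the search must move to ``parabolic arcs'') and defers it to ``the technical heart of Lewy's original argument'' --- which is to say, the part that needed proving is not proved. The rank-zero case has a parallel hole: the instruction to ``iterate to the next non-vanishing Taylor order'' when $|A_m|=|B_m|$ need not terminate (for $f=z^m+\overline{z^m}=2\,\mathrm{Re}(z^m)$ there is no next order), so that stratum requires a separate argument that is not supplied.

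The classical route avoids all of this by working with a \emph{real} harmonic function instead of $f$ itself. Write $f=u+iv$; vanishing of $J_f(0)=u_xv_y-u_yv_x$ means $\nabla u(0)$ and $\nabla v(0)$ are linearly dependent, so some nontrivial real combination $w=au+bv$ has $\nabla w(0)=0$. If $w$ is constant near $0$, then $f$ maps an open set into a line, impossible for an injective continuous map of a planar domain. Otherwise $w-w(0)=\mathrm{Re}(cz^m)+o(|z|^m)$ with $c\neq0$, $m\geq2$, and the level set $\{w=w(0)\}$ near $0$ consists of $m\geq2$ analytic arcs crossing at $0$, hence at least four arc-ends emanating from $0$, all of which $f$ would have to map injectively into the single line $\{a\xi+b\eta=w(0)\}$; a line cannot absorb more than two arc-ends meeting at a point. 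This one argument covers your rank-zero, rank-one, and all degenerate subcases simultaneously. Your degree-theoretic count is sound where $|A_m|\neq|B_m|$, but to make the proof complete you would still need something like the level-set argument for the degenerate strata, at which point it is simpler to run that argument from the start.
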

\begin{thm}\label{teo2}$\cite{AHL}$ An analytic function is injective in some neighborhood of a point if and only if its derivative does not vanish at that point.
\end{thm}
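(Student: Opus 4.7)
The plan is to establish the two implications by separate arguments, both leveraging standard tools from complex analysis.

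For sufficiency ($f'(z_0)\neq 0\Rightarrow$ local injectivity), I would apply the argument principle. Choose $r>0$ so small that $f'$ has no zero and $f-f(z_0)$ has $z_0$ as its only zero in $|z-z_0|\le r$. Then for $w$ close to $f(z_0)$,
$$N(w)=\frac{1}{2\pi i}\oint_{|z-z_0|=r}\frac{f'(z)}{f(z)-w}\,dz$$
counts the preimages of $w$ inside the disk and depends continuously on $w$ as long as $w$ avoids the image of the bounding circle. Since $N$ is integer-valued, it is locally constant, and at $w=f(z_0)$ it equals $1$ because $f'(z_0)\neq 0$ forces a simple zero. Hence $f$ is one-to-one from a small disk around $z_0$ onto a small disk around $f(z_0)$.

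For necessity, I would argue contrapositively. If $f$ is nonconstant and $f'(z_0)=0$, the Taylor expansion is
$$f(z)-f(z_0)=a_k(z-z_0)^k+a_{k+1}(z-z_0)^{k+1}+\cdots,\qquad k\ge 2,\ a_k\neq 0.$$
Factoring out $(z-z_0)^k$ produces a holomorphic $h$ with $h(z_0)=a_k\neq 0$. On a simply connected neighborhood of $z_0$ where $h$ is nonvanishing, $h$ admits a holomorphic $k$-th root $\psi$; setting $\phi(z):=(z-z_0)\psi(z)$ yields the local normal form $f(z)-f(z_0)=\phi(z)^k$ with $\phi'(z_0)=\psi(z_0)\neq 0$. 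By the already-proved sufficiency (or directly by the inverse function theorem), $\phi$ is a local biholomorphism, while $w\mapsto w^k$ is $k$-to-$1$ on a small punctured disk at $0$. Composing, $f$ is $k$-to-$1$ on a punctured neighborhood of $z_0$, contradicting local injectivity.

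The main obstacle is the necessity direction, and specifically the passage from the critical-point Taylor expansion to the normal form $\phi^k$; this rests on the existence of a holomorphic $k$-th root of $h$, which follows once a simply connected nonvanishing neighborhood of $z_0$ is fixed. Everything else reduces to standard facts—continuity and integer-valuedness of the argument-principle counting function on one side, and the branch structure of $w\mapsto w^k$ on the other.
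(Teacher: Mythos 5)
The paper does not prove this statement—it is quoted as a classical result with a citation to Ahlfors—and your argument is correct and is essentially the standard one from that source: the argument principle gives sufficiency, and the local normal form $f(z)-f(z_0)=\phi(z)^k$ with $k\ge 2$ gives necessity (the constant case being trivial). No gaps beyond the routine care of restricting to the component of $f^{-1}(V)$ containing $z_0$ in the sufficiency step.
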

Note that $f=h+\overline{g}$  is locally univalent and sense-preserving in  $\mathbb{D}$ if and only if the second dilatation $|w|<1$ in $\mathbb{D}$ (see \cite{LEW}). We let $\mathcal{S_H}$ be a subclass of functions $f$ in $\mathcal{H}$ that are univalent and sense-preserving harmonic mappings $f=h + \overline{g}$ defined in $\de$, normalized by the conditions $f(0)=0$ and $f_z(0)=1$ in $\de$. Thus a function $f=h + \overline{g}$ in $\mathcal{S}_{\mathcal{H}}$ has the representation
\begin{equation}\label{equa11}
		f(z)=z+\sum^{\infty}_{n=2}a_{n}z^{n}+\sum^{\infty}_{n=1}b_{n}\overline{z}^{n},\quad z\in\de.
\end{equation} 
It follows from the sense-preserving property that $|b_1|<1.$ If we restrict with an extra condition of normalization that $f_{\overline{z}}(0)=b_1=0$, then the class $\mathcal{S}_{\mathcal{H}}$ is denoted by $\mathcal{S}^0_{\mathcal{H}}$.  We observe that for $g(z)\equiv 0$ in $\de$, the class $\mathcal{S_H}$ reduces to the class $\mathcal{S}$ of normalized analytic univalent functions in $\de$. Thus, $\mathcal{S}\subset\mathcal{S}^0_{\mathcal{H}}\subset\mathcal{S}_{\mathcal{H}}$. For history of  families $\mathcal{S}$ and $\mathcal{S}_{\mathcal{H}}$, the reader may refer to \cite{DUR, POM}. 

In 1984, Clunie and Sheil-Small \cite{Clunie1984}  investigated the class $\mathcal{S_H}$ as well as its geometric subclasses and obtained some coeffcient bounds. Clunie and Sheil-Small \cite{Clunie1984}  discovered a result for the family $\mathcal{S}^0_{\mathcal{H}}$, analogous to the Koebe function which is in the class $\mathcal{S}$. In fact, they constructed the harmonic Koebe function $k_0=h +\overline{g}\in\mathcal{S}^0_{\mathcal{H}}$ defined by
\begin{equation}\label{equa13}
	k_0(z)=\frac{z-\frac{1}{2}z^2+\frac{1}{6}z^3}{(1-z)^3}+\overline{\frac{\frac{1}{2}z^2+\frac{1}{6}z^3}{(1-z)^3}}.
\end{equation}
Additionally, Clunie and Sheil-Small \cite{Clunie1984} obtained the following harmonic analogues of the Bieberbach conjecture for the family $\mathcal{S}^0_{\mathcal{H}}$.
\begin{con}\label{con1}$\cite{Clunie1984}$ If $f=h+\overline{g}$ given by (\ref{equa11}) is in the class $\mathcal{S}^0_{\mathcal{H}}$, then	
	\begin{enumerate}
		\item[(i)]	$\big||a_n|-|b_n|\big|\leq n,\quad n\geq2.$
		\item[(ii)]	$|a_n|\leq \frac{(2n+1)(n+1)}{6}, \quad n\geq2.$
		\item[(iii)] $|b_n|\leq \frac{(2n-1)(n-1)}{6}, \quad n\geq2.$
	\end{enumerate}
	Equality occurs for $k_0$ which given in (\ref{equa13})	
\end{con}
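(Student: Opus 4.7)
The plan is to prove the conjecture for the full class $\mathcal{S}^0_{\mathcal{H}}$, so the argument must \emph{not} restrict to constant second dilatation---the extremal $k_0$ has $w_0(z)=z$, and sharpness would be lost otherwise. By Lewy's theorem (Theorem~\ref{teo1}) the dilatation $w=g'/h'$ is an analytic self-map of $\mathbb{D}$; the normalizations $a_1=1$ and $b_1=0$ force $w(0)=0$, so $w(z)=\sum_{k\geq 1}c_k z^k$ with $\sum|c_k|^2\leq 1$ by Schwarz--Pick. Expanding $g'(z)=w(z)h'(z)$ and matching Taylor coefficients yields the master identity
\[
n\,b_n=\sum_{k=1}^{n-1}k\,a_k\,c_{n-k},\qquad n\geq 2,
\]
which expresses each $b_n$ as a bilinear form in the $(a_j)$ and $(c_j)$ and already gives $|b_n|\leq\sum_{k=1}^{n-1}(k/n)|a_k|\,|c_{n-k}|$. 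Note that for $k_0$ one has $c_1=1$, $c_k=0$ for $k\geq 2$, which correctly reproduces $b_2(k_0)=1/2$, etc.

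With the master identity in hand, bound (ii) reduces to a sharp coefficient estimate for the analytic part $h$, which I would attempt via a Schiffer--Garabedian-type variational argument: perturb $f$ along admissible harmonic directions preserving membership in $\mathcal{S}^0_{\mathcal{H}}$ and solve the resulting Euler--Lagrange equations to identify $k_0$ as the extremum. Substituting $|a_n|\leq(n+1)(2n+1)/6$ into the master identity and maximizing over $(c_k)$ subject to the Schwarz--Pick constraint then yields (iii), with the maximum attained precisely by $w(z)=z$---i.e.\ by $k_0$. Finally (i) follows by applying the same identity to an affine shear $h+\overline{e^{i\theta}g}$ of $f$, with $\theta$ chosen to align the phases of $a_n$ and $b_n$ so that the analytic estimate for the shear reads off as $\bigl||a_n|-|b_n|\bigr|\leq n$.

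The main obstacle, and the reason this conjecture has remained open since 1984, is precisely the sharp bound on $|a_n|$ required in step (ii). The analytic part $h$ of a generic $f\in\mathcal{S}^0_{\mathcal{H}}$ is \emph{not} univalent in $\mathbb{D}$: indeed $h_0$ itself violates Bieberbach, with $a_2(h_0)=5/2>2$. Consequently de Branges' theorem cannot be imported directly to $h$, and no standard shear $h+\epsilon g$ automatically belongs to $\mathcal{S}$ either. A successful proof would therefore need additional machinery---such as a harmonic Loewner chain adapted to $\mathcal{S}^0_{\mathcal{H}}$, or a complete extremal-problem analysis carried out on the image domain $f(\mathbb{D})$---to produce the sharp $|a_n|$ estimate required to close the argument via the master identity above.
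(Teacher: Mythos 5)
Your derivation of the master identity $n\,b_n=\sum_{k=1}^{n-1}k\,a_k\,c_{n-k}$ from $g'=wh'$ with $w(0)=0$ is correct, and the sanity check against $k_0$ (where $c_1=1$, $c_k=0$ for $k\geq 2$) is right. But, as you yourself concede, this is not a proof: everything hinges on the sharp estimate $|a_n|\leq(n+1)(2n+1)/6$ for the analytic part of a general $f\in\mathcal{S}^0_{\mathcal{H}}$, and you supply only the name of a method (a Schiffer--Garabedian variation, a harmonic Loewner chain) rather than an argument. The later steps also do not follow even granting (ii): maximizing $\sum_{k=1}^{n-1}(k/n)|a_k|\,|c_{n-k}|$ over the Schwarz--Pick ball does not by itself return $(n-1)(2n-1)/6$ without joint control of the $|a_k|$, and the ``affine shear with aligned phases'' route to (i) presupposes an analytic coefficient bound $|a_n+e^{i\theta}b_n|\leq n$ for $h+e^{i\theta}g$ that you have not established (and which, as you note, cannot be imported from univalence, since the shear need not be univalent). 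So the proposal correctly maps the terrain but leaves every mountain unclimbed --- consistent with the fact that the statement is, and remains, a conjecture.

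For comparison, the paper does not prove the conjecture either. Its Theorem \ref{teo4} treats only the subclass $\mathcal{S}_{\mathcal{H}}(\mathcal{S})$ in which $h$ is itself univalent, and rests on the claim (Theorem \ref{teo3} and its corollaries) that every such $f$ has constant dilatation, $g=ch$ with $|c|<1$, after which de Branges gives $|a_n|\leq n$ and $|b_n|=|c||a_n|$, so that part (i) follows trivially. Your observation that $a_2(h_0)=5/2>2$ is exactly the right objection to that route: the conjectured extremal $k_0$ has non-univalent analytic part and non-constant dilatation $w(z)=z$, so it lies outside $\mathcal{S}_{\mathcal{H}}(\mathcal{S})$ altogether, and no bound proved on that subclass can be sharp for, or ``verify,'' Conjecture \ref{con1}. (The constant-dilatation claim itself fails: $f(z)=z+\overline{z^2}/4$ is univalent and sense-preserving, has $h(z)=z\in\mathcal{S}$, yet its dilatation is $z/2$.) In short, your approach is aimed at the actual statement and honestly stalls at the known obstruction, whereas the paper completes a computation for a different and much smaller class; neither settles the conjecture.
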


Let $\Omega$ be the family of functions $\phi$ which are analytic on $\de$, and satisfy the conditions $\phi(0)=0, |\phi(z)|<1$ for all $z\in\de$. If $f_1$ and $f_2$ are analytic functions  on $\de$, then we say that $f_1$ is subordinate to $f_2$ written as  $f_1\prec f_2$, if there exists a Schwarz function $\phi\in \Omega$ such that $f_1(z)=f_2(\phi(z))$.
\begin{lem}\label{lem1}$\cite{JACK}$(Jack's Lemma)  Let $\phi(z)$ be regular in the open unit disc $\mathbb{D}$ with $\phi(0)=0$, $|\phi(z)|<1$.  Then if $|\phi(z)|$ attains its maximum value on the circle $|z|=r$ at the point $z_0$, one has $z_0\phi'(z)={k}\phi(z_0),$ $k\geq 1$.
\end{lem}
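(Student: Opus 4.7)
The plan is to combine a radial Schwarz-lemma bound with the tangential extremum condition at $z_0$. I would first invoke the maximum modulus principle: since $|\phi|$ attains its maximum over the closed disc $\{|z|\le r\}$ at $z_0$, setting $M:=|\phi(z_0)|$ gives $|\phi(z)|\le M$ for $|z|\le r$. The auxiliary function $F(w):=\phi(rw)/M$ is then analytic on $\de$ with $F(0)=0$ and $|F(w)|\le 1$, so Schwarz's lemma yields the radial bound $|\phi(z)|\le (M/r)|z|$ for all $|z|<r$.

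Write $z_0=re^{i\theta_0}$. The tangential extremum condition at $\theta=\theta_0$ reads
\[
0=\frac{d}{d\theta}\bigl|\phi(re^{i\theta})\bigr|^{2}\bigg|_{\theta=\theta_0}=-2\,\mathrm{Im}\!\left[z_0\phi'(z_0)\overline{\phi(z_0)}\right],
\]
so after dividing by $|\phi(z_0)|^{2}>0$ one obtains $\mathrm{Im}[z_0\phi'(z_0)/\phi(z_0)]=0$; hence $k:=z_0\phi'(z_0)/\phi(z_0)$ is real.

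To show $k\ge 1$, I would restrict the Schwarz estimate to the radius $\rho\mapsto\rho e^{i\theta_0}$: the function $\rho\mapsto|\phi(\rho e^{i\theta_0})|$ is bounded above by $M\rho/r$ on $[0,r]$ and attains $M$ at $\rho=r$, so its left-hand derivative at $\rho=r$ is at least $M/r$. A direct computation of that derivative in terms of $\phi$ equals $(M/r)\mathrm{Re}[z_0\phi'(z_0)/\phi(z_0)]$, whence $k=\mathrm{Re}\,k\ge 1$. Combining this with the angular step produces the claim.

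The main obstacle is the last step: Schwarz's lemma, applied on the open unit disc, yields the inequality but does not immediately pin down the derivative at the boundary point $z_0/r$ where equality occurs. The resolution is precisely the Hopf-type one-sided derivative argument just described (or, equivalently, the boundary Schwarz lemma applied to $F(w)/w$, whose modulus attains its maximum $1$ at $z_0/r$). Once $d|\phi|^{2}/d\theta$ is differentiated correctly, the angular part is routine.
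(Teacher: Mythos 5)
The paper does not actually prove this lemma---it is quoted from Jack's 1971 paper with only a citation---so there is no internal proof to compare against; your argument must stand on its own, and it does. What you describe is essentially the classical proof of Jack's lemma: the maximum modulus principle plus Schwarz's lemma give $|\phi(z)|\le (M/r)|z|$ on $|z|\le r$ with equality at $z_0$; the vanishing of $\frac{d}{d\theta}|\phi(re^{i\theta})|^2$ at the maximum yields $\mathrm{Im}\,k=0$ (your sign and the division by $|\phi(z_0)|^2$ are both correct); and the one-sided difference quotient of $\rho\mapsto|\phi(\rho e^{i\theta_0})|$ against the linear majorant $M\rho/r$ gives $\mathrm{Re}\,k\ge 1$, so $k\ge 1$. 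The only points worth flagging are (i) the implicit assumption $\phi\not\equiv 0$, which is needed so that $M=|\phi(z_0)|>0$ and the divisions by $\phi(z_0)$ are legitimate (the statement is degenerate otherwise), and (ii) that the radial derivative computation requires $|\phi|$ to be differentiable at $z_0$, which holds precisely because $\phi(z_0)\ne 0$; with those noted, the proof is complete. (Incidentally, the statement as printed contains a typo: $z_0\phi'(z)$ should read $z_0\phi'(z_0)$, which is what you prove.)
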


In this present paper, we will investigate the class of harmonic univalent functions given as 
$$\mathcal{S}_{\mathcal{H}}(\mathcal{S}):=\big\{f=h+\overline{g}|  h\in\mathcal{S}\big\}.$$
We first introduce relation between analytic and co-analytic part of the class harmonic univalent functions by means of second dilatation is constant. In Theorem \ref{teo4}, we prove the Conjecture \ref{con1} part (i) and further, in Theorem \ref{teo5} the distortion bounds of $f$ will be proven.
\section{Main Results}
\begin{thm}\label{teo3} Let $f=h+\overline g$ given by (\ref{equa11}) be an element of $\mathcal{S}_{\mathcal{H}}(\mathcal{S})$, then 
	\begin{equation}
		\displaystyle|g(z)|<|h(z)|.
	\end{equation}
\end{thm}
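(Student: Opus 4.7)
The plan is to exploit the sense-preserving hypothesis together with the assumption --- emphasized in the abstract but left implicit in the theorem statement --- that the second dilatation $w = g'/h'$ is constant on $\mathbb{D}$. Since $f \in \mathcal{S}_{\mathcal{H}}(\mathcal{S}) \subset \mathcal{S}_{\mathcal{H}}$ is sense-preserving, the remark following Theorem \ref{teo1} forces $|w(z)| = |g'(z)/h'(z)| < 1$ throughout $\mathbb{D}$. Adopting the constancy hypothesis I then write $w(z) \equiv c$ with $|c| < 1$, equivalent to the first-order relation $g'(z) = c\,h'(z)$.

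From here the argument is essentially integration. Because $\mathbb{D}$ is simply connected, $g'(z) = c\,h'(z)$ implies $g(z) = c\,h(z) + C$ for some constant $C$. The normalizations $g(0)=0$ and $h(0)=0$, inherited from (\ref{equa11}) and from the membership $h \in \mathcal{S}$, force $C = 0$, giving the identity $g(z) = c\,h(z)$ throughout $\mathbb{D}$. Since $h \in \mathcal{S}$ is univalent with $h(0) = 0$, we have $h(z) \neq 0$ for every $z \in \mathbb{D} \setminus \{0\}$, and taking moduli yields
\[
|g(z)| = |c|\,|h(z)| < |h(z)|
\]
for each such $z$, which is exactly the desired inequality (the point $z=0$ giving only equality $0=0$).

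There is no substantive obstacle under the constant-dilatation hypothesis; the entire content is compressed into the recognition that the constancy of $w$ is the operative assumption. The one point I would want to verify is whether the theorem is intended to hold without that extra assumption, since for a general $f \in \mathcal{S}_{\mathcal{H}}(\mathcal{S})$ the inequality appears considerably more delicate. In that broader setting I would try to apply Jack's lemma (Lemma \ref{lem1}) to the analytic quotient $\phi(z) = g(z)/h(z)$, which is regular on $\mathbb{D}$ because $h \in \mathcal{S}$ vanishes only at the origin with a simple zero, after first pre-composing with a disc automorphism that sends $\phi(0) = b_1$ to $0$ so that the hypotheses of Jack's lemma are met; one would then argue by contradiction that $|\phi|$ cannot attain the value $1$ inside $\mathbb{D}$, using the sense-preserving bound $|g'| < |h'|$ to derive a contradiction with the $k \geq 1$ conclusion of Jack's lemma.
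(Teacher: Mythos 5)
There is a genuine gap: your primary argument assumes exactly what the paper derives \emph{from} this theorem. The constancy of the second dilatation $w=g'/h'$ is not a hypothesis of Theorem \ref{teo3} --- in the paper it appears only afterwards, as Corollary 2.3, and is obtained as a consequence of Theorem \ref{teo3} via Corollary 2.2 ($g=ch$). So taking $w\equiv c$ as given and integrating $g'=c\,h'$ proves only a special case and, relative to the paper's logical structure, is circular. (The integration step itself, and the use of $g(0)=h(0)=0$ to kill the constant of integration, are fine; that part is exactly how the paper passes from Theorem \ref{teo3} to its corollaries.)

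Your closing sketch for the general case is, in outline, the paper's actual route: the paper defines $\phi$ through $g(z)/h(z)=\bigl(b_1+\phi(z)\bigr)/\bigl(1+\overline{b_1}\phi(z)\bigr)$ (your ``pre-compose with a disc automorphism sending $b_1$ to $0$''), and argues by contradiction with Jack's Lemma that $|\phi|<1$. But the contradiction step is where all the work lives, and your sketch does not supply it: knowing $|g'|<|h'|$ and $k\geq 1$ is not by itself enough. What the paper does is differentiate the defining relation for $\phi$ to get
\begin{equation*}
\frac{g'(z)}{h'(z)}=\frac{b_1+\phi(z)}{1+\overline{b_1}\phi(z)}+\frac{(1-|b_1|^2)\,z\phi'(z)}{(1+\overline{b_1}\phi(z))^2}\cdot\frac{h(z)}{zh'(z)},
\end{equation*}
replace $z\phi'$ by $k\phi$ at a point where $|\phi|=1$, and then use quantitative information about $h\in\mathcal{S}$ (the Koebe-transform/growth estimates for $h(z)/(zh'(z))$) to show that the resulting value of $g'/h'$ must lie \emph{outside} the disc to which the subordination $w\prec(b_1+z)/(1+\overline{b_1}z)$ confines it. Without that extra correction term and the estimate that pushes it out of the admissible disc, there is no contradiction to be had, so the general-case argument remains a plan rather than a proof. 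I would also flag that you are right to be suspicious of the constant-dilatation claim itself: it is a strong structural statement, and any proof of Theorem \ref{teo3} that quietly presupposes it should be treated as proving a different, weaker theorem.
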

\begin{proof}
	Since $f=h+\overline{g}\in \mathcal{S}_{\mathcal{H}}(\mathcal{S})$,  then we can write  
	$$w(z)=\frac{g'(z)}{h'(z)}\Rightarrow w(0)=b_1.$$
	By condition of Schwarz function and subordination,
	$$ \phi(z)=\frac{w(z)-w(0)}{1-\overline{w(0)}w(z)}$$ 
	can be written
	$$w(z)=\frac{b_1+\phi(z)}{1+\overline{b_1}\phi(z)}\Leftrightarrow w(z)\prec \frac{b_1+z}{1+\overline{b_1}z}.$$
	On the other hand the linear transformation $\displaystyle\frac{b_1+z}{1+\overline{b_1}z} $ maps $|z|=r$ onto the disc with the centre 
	$$C(r)=\bigg(\displaystyle\frac{(1-r^2)\Re b_1}{1-|b_1|^2r^2},\displaystyle\frac{(1-r^2)\Im b_1}{1-|b_1|^2r^2}\bigg),$$ and with the radius $$\rho(r)=\displaystyle\frac{(1-|b_1|^2)r}{1-|b_1|^2r^2}.$$
	Then, we have 
	$$\bigg|\displaystyle\ w(z)-\frac{|b_1|(1-r^2)}{1-|b_1|^2r^2}\bigg|\leq\displaystyle\frac{(1-|b_1|^2)r}{1-|b_1|^2r^2}.$$		
	Using the subordination principle, then we can write
	\begin{equation}
		\displaystyle w(\mathbb{D}_{r})=\left\{\frac{g'(z)}{h'(z)}:\bigg|\displaystyle\frac{ g'(z)}{h'(z)}-\frac{b_1(1-r^2)}{1-|b_1|^2r^2}\bigg|\leq\displaystyle\frac{(1-|b_1|^2)r}{1-|b_1|^2r^2},r\in\mathcal(0,1)\right\}.
	\end{equation}	
In order to verify Schwarz function conditions, we define the function $\phi$ by
\begin{equation}
	\displaystyle \frac{g(z)}{h(z)}=\frac{b_1+\phi(z)}{1+\overline b_1\phi(z)}.
\end{equation}
Note that $\phi$ is a well defined analytic function and $\phi(0)=0$. We now need to show that $\phi$ satisfies the condition $|\phi(z)|<1$ for all $z\in \mathbb{D}$. Taking the derivative from (2.3), we obtain that 
\begin{equation}
	\displaystyle w(z)=\frac{g'(z)}{h'(z)}=\frac{b_1+\phi(z)}{1+\overline b_1\phi(z)}+\frac{(1-|b_1|^2)z\phi'(z)}{(1+\overline b_1\phi(z))^2}\frac{h(z)}{zh'(z)}.
\end{equation}
Using the Koebe transformation of the function $h$ (see \cite{POM}), the equality (2.4) can be written in the form 
\begin{equation}
	\displaystyle w(z)=\frac{g'(z)}{h'(z)}=\bigg(\frac{b_1+\phi(z)}{1+\overline b_1\phi(z)}+\frac{(1-|b_1|^2)z\phi'(z)}{(1+\overline b_1\phi(z))^2}\frac{1}{1-|z|^2}\frac{|h(z)|}{|z|}{e^{i\theta}}\bigg).
\end{equation}
Assume to the contrary that there exists a point  $z_1\in\mathbb{D}_{r}$ such that $|\phi(z_1)|=1$. In view of Lemma \ref{lem1}, the equality (2.5) gives  
$$\displaystyle w(z_1)=\frac{g(z_1)}{h(z_1)}=\Big(\frac{b_1+\phi(z_1)}{1+\overline b_1\phi(z_1)}+\frac{(1-|b_1|^2) k\phi(z_1)}{(1+\overline b_1\phi(z_1))^2}\frac{1}{1-|z_1|^2}\frac{|h(z_1)|}{|z_1|}\Big)\notin w(\mathbb{D}_{r})$$
This is a contradiction with (2.2) and hence $|\phi(z)|<1$ for all $z\in\de$. Therefore, we have 
$$\frac{g(z)}{h(z)}\prec\frac{b_1+z}{1+\overline b_1(z)}\iff \frac{g(z)}{h(z)}=\frac{b_1+\phi(z)}{1+\overline b_1\phi(z)}=w(z).$$
This shows that $g(z)=h(z)w(z)\Rightarrow |g(z)|<|h(z)|.$
\end{proof}
\begin{cor} Let $f=h+\overline g\in\mathcal{S}_{\mathcal{H}}(\mathcal{S})$, then $g(z)=ch(z)$, $|c|<1$ and complex.
\end{cor}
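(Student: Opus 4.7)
The plan is to exploit the identity already established at the very end of the proof of Theorem \ref{teo3}. There it is shown that the same function $w(z)$ simultaneously represents the second dilatation $g'(z)/h'(z)$ and the ratio $g(z)/h(z)$; that is,
\begin{equation*}
\frac{g(z)}{h(z)} \;=\; w(z) \;=\; \frac{g'(z)}{h'(z)} \qquad (z\in \mathbb{D}\setminus\{0\}).
\end{equation*}
Cross-multiplying gives $g'(z)h(z)-g(z)h'(z)\equiv 0$, which is exactly the vanishing of the numerator of $(g/h)'$. This is the engine of the corollary, so the strategy is simply to differentiate the quotient $g/h$ and read off that it must be a constant.

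To make this rigorous, I would first observe that since $h\in\mathcal{S}$ is univalent with $h(0)=0$, one has $h(z)\neq 0$ on $\mathbb{D}\setminus\{0\}$, so the logarithmic derivative
\begin{equation*}
\Bigl(\frac{g}{h}\Bigr)'(z) \;=\; \frac{g'(z)h(z)-g(z)h'(z)}{h(z)^{2}}
\end{equation*}
makes sense and vanishes identically there. Moreover, the expansions $h(z)=z+a_{2}z^{2}+\cdots$ and $g(z)=b_{1}z+b_{2}z^{2}+\cdots$ show that $g/h$ extends analytically across the apparent singularity at $z=0$ with value $b_{1}$. Hence $g/h$ is an analytic function on $\mathbb{D}$ whose derivative is identically zero, so by connectedness it is a constant $c$, and evaluation at the origin identifies $c=b_{1}$.

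It remains to verify $|c|<1$. Since $c=b_{1}$ and $f\in\mathcal{S}_{\mathcal{H}}(\mathcal{S})\subset\mathcal{S}_{\mathcal{H}}$ is sense-preserving, the normalization $|b_{1}|<1$ applies directly; alternatively, Theorem \ref{teo3} evaluated at any $z\neq 0$ gives $|c|=|g(z)|/|h(z)|<1$. I do not anticipate a real obstacle: the only subtle point is the removable singularity at $z=0$, and once that is handled the corollary is an immediate consequence of the identity $g/h=g'/h'$ carried over from Theorem \ref{teo3}.
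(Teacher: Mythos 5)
Your proof is correct and follows essentially the same route as the paper: both take the identity $g/h = w = g'/h'$ carried over from the end of the proof of Theorem \ref{teo3} and deduce that $g/h$ is a constant $c=b_1$ with $|c|<1$. The only difference is in the last step — the paper integrates via $\log g=\log(ch)$, while you show $(g/h)'\equiv 0$ directly, which is the cleaner execution since it avoids branch-of-logarithm issues, handles the case $g\equiv 0$, and explicitly treats the removable singularity at the origin and the bound $|c|<1$, none of which the paper addresses.
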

\begin{proof} Using Theorem 2.1 and definition of second dilatation, we write 
	$$\frac{g(z)}{h(z)}=w(z)=\frac{g'(z)}{h'(z)}\Rightarrow \log g(z)=\log ch(z) \Rightarrow g(z)=ch(z).$$
\end{proof}
\begin{cor} Let $f=h+\overline g$ be an element of $\mathcal{S}_{\mathcal{H}}(\mathcal{S})$, then the second dilatation,  $\displaystyle w(z)=\frac{g'(z)}{h'(z)}$ is constant.
\end{cor}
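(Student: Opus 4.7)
The plan is to derive Corollary 2.3 as an immediate consequence of Corollary 2.2, which has already established that $g(z) = c\,h(z)$ for some complex constant $c$ with $|c| < 1$. Once this functional identity is in hand, the behaviour of the second dilatation $w(z) = g'(z)/h'(z)$ is forced by a single differentiation.

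Concretely, the steps I would carry out are as follows. First, I would invoke Corollary 2.2 to assert the pointwise identity $g(z) = c\,h(z)$ on $\mathbb{D}$. Since both $g$ and $h$ are analytic on $\mathbb{D}$, I can differentiate this identity term by term to obtain $g'(z) = c\,h'(z)$. Next, I would note that $f = h + \overline{g}$ is sense-preserving and univalent, so by Theorem \ref{teo1} its Jacobian $J_f = |h'|^2 - |g'|^2$ does not vanish on $\mathbb{D}$; in particular $h'(z) \neq 0$ everywhere, which also follows from $h \in \mathcal{S}$ via Theorem \ref{teo2}. This legitimises dividing by $h'(z)$, yielding $w(z) = g'(z)/h'(z) = c$, a constant. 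An equivalent route, which avoids citing Corollary 2.2 as a black box, is to read off from the concluding line of the proof of Theorem \ref{teo3} the identity $g(z)/h(z) = g'(z)/h'(z)$; cross-multiplying gives $g'(z)h(z) - g(z)h'(z) = 0$, which is precisely $(g/h)'(z) h(z)^2 = 0$, so $g/h$ is constant on $\mathbb{D} \setminus \{0\}$, and the same constant $c$ equals $w(z)$ for every $z \in \mathbb{D}$.

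There is essentially no obstacle here beyond bookkeeping: the work was already done in Theorem \ref{teo3}, and the corollary is a one-line differentiation. The only subtlety worth spelling out is ensuring that the division by $h'(z)$ in the definition of $w$ is legitimate throughout $\mathbb{D}$, which rests on the sense-preserving hypothesis together with Theorems \ref{teo1} and \ref{teo2}. With that in place the conclusion $w \equiv c$ is immediate.
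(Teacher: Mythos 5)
Your proposal is correct and follows essentially the same route as the paper: invoke Corollary 2.2 to get $g(z)=c\,h(z)$, differentiate to obtain $g'(z)=c\,h'(z)$, and divide by $h'(z)$ to conclude $w(z)=c$. Your added justification that $h'(z)\neq 0$ (via $h\in\mathcal{S}$ and the sense-preserving hypothesis) is a small but welcome piece of rigor the paper leaves implicit.
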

\begin{proof} Using Corollary 2.2, we have $$g(z)=ch(z)\Rightarrow g'(z)=ch'(z)\Rightarrow w(z)=\frac{g'(z)}{h'(z)}=c$$
	We note that harmonic mapping with the constant dilatation had been introduced by Antti Rasila \cite{RAS}.	
\end{proof}
\begin{cor} Let $f=h+\overline g$ be a harmonic mapping, then $$f\in \mathcal{S}_{\mathcal{H}}(\mathcal{S})\iff h\in \mathcal{S}.$$
This corollary is a simple consequence of Theroem \ref{teo1}, Theorem \ref{teo2} and Theorem \ref{teo3}.	
\end{cor}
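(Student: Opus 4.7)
The plan is to treat the two directions of the equivalence separately, since they have very different content. The forward implication, $f \in \mathcal{S}_{\mathcal{H}}(\mathcal{S}) \Rightarrow h \in \mathcal{S}$, is essentially tautological: membership in $\mathcal{S}_{\mathcal{H}}(\mathcal{S})$ is defined to require the analytic part $h$ to belong to $\mathcal{S}$, so this direction amounts to unpacking the definition and requires no further work.

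For the reverse implication, given a harmonic mapping $f = h + \overline{g}$ with $h \in \mathcal{S}$, I would verify the remaining defining properties of $\mathcal{S}_{\mathcal{H}}(\mathcal{S}) \subset \mathcal{S}_{\mathcal{H}}$, namely that $f$ is sense-preserving and univalent. First, since $h$ is analytic and injective on $\mathbb{D}$, Theorem \ref{teo2} yields $h'(z)\neq 0$ throughout $\mathbb{D}$, so the second dilatation $w(z) = g'(z)/h'(z)$ is well defined on the entire disc. Invoking Theorem \ref{teo3} together with Corollaries 2.2 and 2.3 then identifies the structural content of the class: $g(z) = c\,h(z)$ for some constant $c$ with $|c|<1$, and consequently $w(z) \equiv c$. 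In particular $|g'(z)| = |c|\,|h'(z)| < |h'(z)|$ in $\mathbb{D}$, which is exactly the sense-preserving condition; Lewy's theorem (Theorem \ref{teo1}) then guarantees $J_f(z)\neq 0$ and hence local univalence.

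To upgrade local univalence to global univalence, I would use the form $f = h + \bar c\,\overline{h}$ directly: if $f(z_1)=f(z_2)$, then $h(z_1)-h(z_2) = -\bar c\,\overline{h(z_1)-h(z_2)}$, so $|h(z_1)-h(z_2)| = |c|\,|h(z_1)-h(z_2)|$, forcing $h(z_1)=h(z_2)$ since $|c|<1$, and then $z_1=z_2$ by univalence of $h$. The subtlest point — and the one most likely to raise an objection — is that Theorem \ref{teo3} is a statement about elements of $\mathcal{S}_{\mathcal{H}}(\mathcal{S})$, so in order to avoid circularity the correct interpretation is that its real output is the structural identity $g = c h$ with $|c|<1$, which is the hypothesis one actually needs in the reverse direction. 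Once this is granted, the verification reduces to the three lines above.
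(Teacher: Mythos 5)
Your forward direction is fine, and your closing argument for global univalence of $f = h + \overline{c\,h}$ (from $f(z_1)=f(z_2)$ deduce $|h(z_1)-h(z_2)| = |c|\,|h(z_1)-h(z_2)|$ and conclude from $|c|<1$) is correct and is actually more than the paper supplies, since the paper's entire proof is the single sentence that the corollary follows from Theorems \ref{teo1}, \ref{teo2} and \ref{teo3}. The problem is the reverse implication. You correctly sense the circularity, but your repair does not remove it: the identity $g = c\,h$ with $|c|<1$ is the conclusion of Theorem \ref{teo3} and Corollary 2.2, both of which take $f \in \mathcal{S}_{\mathcal{H}}(\mathcal{S})$ as a hypothesis, and in the reverse direction that hypothesis is exactly what you are trying to establish. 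Declaring that the ``real output'' of Theorem \ref{teo3} is the structural identity does not make that identity available for an arbitrary harmonic $f = h+\overline{g}$ with $h \in \mathcal{S}$; it silently adds a hypothesis the corollary does not contain. Without it the implication is false under the natural reading of $\mathcal{S}_{\mathcal{H}}(\mathcal{S})$ as a subclass of the univalent sense-preserving mappings $\mathcal{S}_{\mathcal{H}}$: take $h(z)=z \in \mathcal{S}$ and $g(z)=z^2$; then $|g'(z)| = 2|z| > 1 = |h'(z)|$ for $|z|>1/2$, so $f = z + \overline{z}^{\,2}$ is not sense-preserving on $\mathbb{D}$ and cannot lie in $\mathcal{S}_{\mathcal{H}}$.

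The only reading under which the stated equivalence holds is the literal one, in which $\mathcal{S}_{\mathcal{H}}(\mathcal{S}) := \{f = h + \overline{g} \mid h \in \mathcal{S}\}$ imposes no condition at all on $g$ or on $f$ itself; then both directions are definitional tautologies and your second and third paragraphs are unnecessary. Either way, the substantive content you attempt in the reverse direction --- that $h \in \mathcal{S}$ forces $f$ to be univalent and sense-preserving --- is not obtainable: it genuinely requires the extra datum $g = c\,h$ with $|c|<1$, and that datum cannot be extracted from $h \in \mathcal{S}$ alone. This defect is arguably inherited from the paper, which asserts the corollary without proof, but as a standalone argument your reverse direction does not go through.
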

\begin{thm}\label{teo4} Let $f=h+\overline g$ given by (\ref{equa11}) be an element of $\mathcal{S}_{\mathcal{H}}(\mathcal{S})$, then 
\begin{enumerate}
\item[(a)] $|b_n| \leq |c|\displaystyle\frac{n(n+1)}{2},$
\item[(b)] $\big||a_n|-|b_n|\big|\leq n,$
\end{enumerate}
where $n\geq2.$ These ineqaulities are sharp.
\end{thm}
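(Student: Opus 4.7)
The plan is to leverage the structural reduction provided by Corollary 2.2, which essentially collapses the problem into a question about the analytic part alone. Because $f \in \mathcal{S}_{\mathcal{H}}(\mathcal{S})$, Corollary 2.2 yields $g(z) = c\,h(z)$ for some complex constant $c$ with $|c| < 1$. Matching coefficients in $h(z) = z + \sum_{n=2}^{\infty} a_n z^n$ and $g(z) = \sum_{n=1}^{\infty} b_n z^n$ immediately gives $b_1 = c$ and $b_n = c\,a_n$ for every $n \geq 2$.

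With this identification in hand, both inequalities follow from the Bieberbach--de Branges estimate $|a_n| \leq n$ applied to $h \in \mathcal{S}$. For part (a), $|b_n| = |c|\,|a_n| \leq |c|\,n$, and since $n \leq \frac{n(n+1)}{2}$ for every $n \geq 1$, the stated bound $|b_n| \leq |c|\frac{n(n+1)}{2}$ is immediate. For part (b), using $|c| < 1$,
\[
\bigl||a_n| - |b_n|\bigr| = (1 - |c|)\,|a_n| \leq (1 - |c|)\,n \leq n.
\]
For sharpness, the natural extremal is obtained by taking $h$ to be the classical Koebe function $k(z) = z/(1-z)^2$ and $g = c\,k$, producing $f(z) = k(z) + c\,\overline{k(z)} \in \mathcal{S}_{\mathcal{H}}(\mathcal{S})$ with $|a_n| = n$ and $|b_n| = |c|\,n$. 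Then $\bigl||a_n| - |b_n|\bigr| = (1-|c|)\,n$ approaches $n$ as $|c| \to 0$, confirming that (b) is sharp.

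The only conceptual point worth flagging is that (a) is in fact weaker than what this argument actually delivers: the natural outcome is $|b_n| \leq |c|\,n$, and the factor $(n+1)/2$ appears to be introduced only to bring the estimate into formal alignment with the quadratic growth anticipated by Conjecture \ref{con1}(iii). There is no genuine technical obstacle once Corollary 2.2 has identified the dilatation as a constant and tied $g$ rigidly to $h$; both statements of Theorem \ref{teo4} are then essentially restatements of de Branges' theorem for the analytic coefficients $a_n$.
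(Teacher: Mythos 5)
Your proof is correct, and it is in fact cleaner than the paper's own argument, although both start from the same reduction (Corollary 2.2: $g=ch$ with $|c|<1$, hence $b_n=ca_n$, combined with de Branges' bound $|a_n|\le n$). For part (a) the paper does not use the direct identity $|b_n|=|c||a_n|$; instead it sums the coefficient moduli, $|b_1|+\cdots+|b_n|\le |c|(1+2+\cdots+n)=|c|\tfrac{n(n+1)}{2}$, and then discards all but the $n$-th term to conclude $|b_n|\le |c|\tfrac{n(n+1)}{2}$. Your observation that the argument really yields the stronger bound $|b_n|\le |c|\,n$ is accurate, and it also explains why the claimed sharpness of (a) is dubious for $n\ge 2$ and $c\ne 0$: the stated quadratic bound cannot be attained, since $|c|n<|c|\tfrac{n(n+1)}{2}$ there. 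For part (b) your computation $\bigl||a_n|-|b_n|\bigr|=(1-|c|)|a_n|\le(1-|c|)n\le n$ is the right one; the paper instead routes through the chain $|a_n|-|b_n|\le|a_n-b_n|\le n\bigl(1-|c|\tfrac{n+1}{2}\bigr)\le n$, whose middle inequality is never justified (and whose right-hand side becomes negative for large $n$, so it cannot hold as written). Your version avoids this problem entirely, so on this point your proof repairs rather than merely reproduces the paper's argument. Your sharpness discussion is also more honest than the paper's bare assertion: equality in (b) forces $c=0$ and $h$ the Koebe function, which is exactly the limiting case you identify.
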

\begin{proof} Since $g(z)=ch(z)$, $|c|<1$, then we have $b_1=c, ...,b_n=ca_n$, $n\geq2$. Since $h\in\mathcal{S}$,  deBranges  Theorem \cite{BRA} states that $|a_n|\leq n$ for every $n\geq2.$ Therefore, we obtain that 
	$$|b_1|+|b_2|+|b_3|+...+|b_n|\leq |c|(1+|a_2|+|a_3|+...+|a_n|)$$  $$\qquad\qquad\qquad\quad\quad\leq |c|(1+2+3...+n)$$ $$\Rightarrow |b_n|\leq |c|\frac{n(n+1)}{2}$$
	This proves the part (a) of theorem. On the other hand, since $|c|<1$ and $n\geq 2$, let
	$$\displaystyle |c|\frac{n+1}{2}>0\Rightarrow -|c|\frac{n+1}{2}<0$$
	$$\quad\Rightarrow 1-|c|\frac{n+1}{2}\leq 1\Rightarrow n(1-|c|\frac{n+1}{2})\leq n$$
	$$\qquad\quad\Rightarrow \displaystyle  |a_n|-|b_n|\leq |a_n-b_n|\leq n (1-|c|\frac{n+1}{2})\leq n$$
	$$\Rightarrow \displaystyle ||a_n|-|b_n||\leq n$$
This proves the part (b) of theorem. This proof verify the Conjecture \ref{con1} part (i).
\end{proof}
\begin{thm}\label{teo5} Let $f=h+\overline g$ given by (\ref{equa11}) be an element of $\mathcal{S}_{\mathcal{H}}(\mathcal{S})$, then 
 $$dist (0,\partial f)\geq \frac{1}{4}(1-|c|).$$ 
\end{thm}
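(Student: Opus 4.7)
The plan is to use the structural decomposition established earlier, namely Corollary 2.2, which gives $g(z)=c\,h(z)$ with $h\in\mathcal{S}$, so that
\[
f(z)=h(z)+\overline{c\,h(z)}=h(z)+\bar c\,\overline{h(z)}.
\]
Thus $f$ is the composition $L\circ h$, where $L:\mathbb{C}\to\mathbb{C}$ is the real-linear map $L(\zeta)=\zeta+\bar c\,\bar\zeta$. Because of this factorization, the problem of locating a disk around $0$ inside $f(\de)$ reduces to (i) applying a classical result to $h$ and (ii) understanding how $L$ distorts disks around the origin.

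First I would verify that $L$ is a real-linear bijection of $\mathbb{C}$ and compute its inverse explicitly. Solving $\zeta+\bar c\,\bar\zeta=w$ (by combining this equation with its conjugate multiplied by $c$) yields
\[
\zeta=\frac{w-\bar c\,\bar w}{1-|c|^{2}},
\]
which is well-defined since $|c|<1$. From this formula the triangle inequality gives the key estimate
\[
|\zeta|\le\frac{(1+|c|)\,|w|}{1-|c|^{2}}=\frac{|w|}{1-|c|}.
\]
Equivalently, $L$ maps the disk $\{|\zeta|<1/4\}$ onto an ellipse whose inscribed disk about $0$ has radius $(1-|c|)/4$.

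Next I would invoke the classical Koebe one-quarter theorem for the univalent analytic function $h\in\mathcal{S}$, giving $\{w:|w|<1/4\}\subset h(\de)$. Combining this with the preceding bound, for any $w$ with $|w|<(1-|c|)/4$ the preimage $\zeta=L^{-1}(w)$ satisfies $|\zeta|<1/4$, hence $\zeta\in h(\de)$, and thus there exists $z\in\de$ with $h(z)=\zeta$, yielding $f(z)=L(\zeta)=w$. Consequently $\{w:|w|<(1-|c|)/4\}\subset f(\de)$, which is exactly the claimed lower bound on $\mathrm{dist}(0,\partial f)$.

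There is no real obstacle beyond the invertibility calculation for $L$; the whole argument is a short chain of the identity $g=ch$ (from Corollary 2.2), a routine linear-algebra computation for $L$, and Koebe's $1/4$-theorem applied to $h$. The only point that needs mild care is keeping track of the fact that the factorization $f=L\circ h$ and univalence of $h$ make surjectivity onto $w$ automatic once $\zeta\in h(\de)$ is established, so no additional harmonic univalence argument is needed.
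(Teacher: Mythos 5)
Your argument is correct, but it is a genuinely different route from the paper's. The paper proves the theorem via growth/distortion estimates: it combines the classical bound $\frac{1-r}{(1+r)^3}\le|h'(z)|$ for $h\in\mathcal{S}$ with the sense-preserving inequality $|df|\ge(|h'|-|g'|)\,|dz|=(1-|c|)|h'|\,|dz|$, integrates along a path to obtain $|f|\ge\frac{(1-|c|)r}{(1+r)^2}$, and then lets $r\to1$. You instead exploit the factorization $f=L\circ h$ with $L(\zeta)=\zeta+\bar c\,\bar\zeta$, invert $L$ explicitly, bound $|L^{-1}(w)|\le|w|/(1-|c|)$, and apply the Koebe one-quarter theorem to $h$. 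Both proofs rest on Corollary 2.2 ($g=ch$). Your version is arguably cleaner for this particular statement: it yields the set inclusion $\{w:|w|<\tfrac14(1-|c|)\}\subset f(\de)$ directly, which is literally what the distance inequality asserts, and it sidesteps the path-integration step that the paper carries out only informally (the displayed inequality $(1-|c|)\int\frac{1-r}{(1+r)^3}\,dr\le|f|$ suppresses the choice of curve along which $|df|$ is integrated). What the paper's approach buys in exchange is the full two-sided growth estimate $\frac{(1-|c|)r}{(1+r)^2}\le|f|\le\frac{(1+|c|)r}{(1-r)^2}$, i.e., an upper bound as well, which your covering argument does not produce. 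One small presentational point: your remark that $L$ maps $\{|\zeta|<1/4\}$ onto an ellipse is not needed; the inequality $|L^{-1}(w)|\le|w|/(1-|c|)$ already does all the work.
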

\begin{proof}
	Since  $f=h+\overline g\in \mathcal{S}_{\mathcal{H}}(\mathcal{S})$, then we have distortion bound of the function $h$ as
	\begin{equation}
		\frac{1-r}{(1+r)^3}\leq |h'(z)|\leq\frac{1+r}{(1-r)^3}, 
	\end{equation}
	and since the function $f$ is sense-preserving, we have
	$$(|h'(z)|-|g'(z)|)|dz|\leq |df|\leq (|h'(z)|+|g'(z)|)|dz|,$$
	\begin{equation}
		(1-|c|)|h'(z)||dz|\leq |df|\leq (|1+|c|)|h'(z)||dz|.
	\end{equation}
	Considering (2.6) and (2.7) together, we obtain 
	$$(1-|c|)\int \frac{1-r}{(1+r)^3}dr\leq |f|\leq (1+|c|)\int \frac{1+r}{(1-r)^3}dr.$$
	Calculating of above integral, we get
	\begin{equation}
		\frac{(1-|c|)r}{(1+r)^2}\leq |f|\leq \frac{(1+|c|)r}{(1-r)^2}.
	\end{equation}
	Therefore, \\
	$$dist(0, \partial f)=\lim_{r=|z|\to 1}\inf |f|\geq\lim_{r=|z|\to 1} \frac{(1-|c|)|z|}{(1+|z|)^2}=\frac{1}{4}(1-|c|).$$
\end{proof}

\footnotesize
\vspace{1cm}

\textsc{Ya\c{s}ar Polato\~{g}lu}\\
Department of Mathematics and Computer Sciences\\
\.{I}stanbul K\"{u}lt\"{u}r University, \.{I}stanbul, Turkey\\ 
e-mail: y.polatoglu@iku.edu.tr\\

\textsc{Oya Mert}\\
Department of Basic Sciences\\
Alt{\i}nba\c{s} University, \.{I}stanbul, Turkey\\ 
e-mail:oya.mert@altinbas.edu.tr  \\

\textsc{Asena \c{C}etinkaya}\\
Department of Mathematics and Computer Sciences\\
\.{I}stanbul K\"{u}lt\"{u}r University, \.{I}stanbul, Turkey\\ 
e-mail: asnfigen@hotmail.com \\


\begin{thebibliography}{1}
	\bibitem{AHL} L. V. Ahlfors, \textit{Complex Analysis}, 3 rd edition, Mc Graw-Hill New York, 1979.
	\bibitem{Clunie1984} J. Clunie, T. Sheil-Small, \textit{Harmonic univalent functions,} Ann. Acad. Sci. Fenn. Ser. A I
	Math. {\bf 9} (1984), 3-25.
	\bibitem{BRA} L. deBranges, \textit{A proof of the Bieberbach Conjecture}, Acta. Math. {\bf 154}(1-2) (1985), 137-152.
	\bibitem{DUR} P. Duren, \textit{Harmonic Mappings in the Plane}, Cambridge University Press, 2004.
	\bibitem{HEN} E. Heinz, \textit{\"{U}ber die L\"{o}sungen der Minimalfl\"{a}chengleichung,} (German) Nachr. Akad. Wiss. Göttingen. Math.-Phys. Kl. Math.-Phys.-Chem. Abt. (1952), 51-56.
	\bibitem{SCHO} W. Hengartner, G. Schober, \textit{Univalent harmonic functions}, Trans. Amer. Math. Soc. {\bf 299}(1) (1987), 1-31.
	\bibitem{JACK} I. S. Jack, \textit{Functions starlike and convex of order $\alpha$,} J. London Math. Soc. {\bf 2}(3) (1971), 469-474.
	\bibitem{LEW} H. Lewy, \textit{On the non-vanishing of the Jacobian in certain one-to-one mappings}, Bull. Amer. Math. Soc. {\bf 42} (1936), 689-692.
	\bibitem{POM} C. R. Pommerenke, \textit{Univalent Functions}, Vandenhoeck, Ruprecht in Gottingen, 1975.
	\bibitem{RAS} A. Rasilla, \textit{Mappings problems and harmonic univalent mappings}, Helsinki Analysis Seminar, (2009), 11-16.
	\bibitem{SCHO2} G. Schober, \textit{Planar harmonic mappings,} In: Lecture Notes in Math. 478 (1975), Springer Verlag, New York.
	\bibitem{Sheil} T. Sheil-Small, \textit{Constants for planar harmonic mappings,} J. London Math. Soc. {\bf 42}(2) (1990), 237-248.
	\end{thebibliography}
\end{document}